\definecolor{darkorchid}{rgb}{0.6,0.196,0.8}
\newcommand{\figref}[1]{Figure~\ref{fig:#1}}
\newcommand{\exref}[1]{Example~\ref{ex:#1}}
\newcommand{\secref}[1]{Section~\ref{sec:#1}}
\newcommand{\thmref}[1]{Theorem~\ref{thm:#1}}
\newcommand{\defref}[1]{Definition~\ref{def:#1}}
\newcommand{\di}[1]{\overrightarrow{#1}}
\newcommand{\vecj}{\mathbf{j}}
\newcommand{\vecv}{\mathbf{v}}
\newcommand{\vecw}{\mathbf{w}}
\newcommand{\R}{\mbox{$\mathbb R$}}
\begin{document}

\title*{Towards Directed Collapsibility}

\author{Robin Belton,
       Robyn Brooks,
       Stefania Ebli,
       Lisbeth Fajstrup,
       Brittany Terese Fasy,
       Catherine Ray,
       Nicole Sanderson,
       and Elizabeth Vidaurre}
\institute{ Robin Belton \at Montana State University, Bozeman, MT, USA,
        \email{robin.belton@montana.edu},
    \and Robyn Brooks \at Tulane University, New Orleans, LA, USA,
        \email{rbrooks3@tulane.edu},
    \and Stefania Ebli \at EPFL, Lausanne, Switzerland,
        \email{stefania.ebli@epfl.ch},
    \and Lisbeth Fajstrup \at Aalborg University, Aalborg, Denmark
        \email{fajstrup@math.aau.dk},
    \and Brittany Terese Fasy \at Montana State University Bozeman, MT, USA,
        \email{brittany.fasy@montana.edu},
    \and Nicole Sanderson \at Lawrence Berkeley National Lab, Berkeley, CA, USA,
        \email{nsanderson@lbl.gov},
    \and Catherine Ray \at
        Northwestern University, Evanston, IL, USA
        \email{cray@math.northwestern.edu}
    \and Elizabeth Vidaurre \at Molloy College, Rockville Centre, NY, USA,
        \email{evidaurre@molloy.edu}}
%
%
%

\maketitle


\abstract{
In the directed setting, the spaces of directed paths between fixed initial and
    terminal points are the defining feature for distinguishing different
    directed spaces. The simplest case is when the space of directed paths is
    homotopy equivalent to that of a single path; we call this the \emph{trivial
    space of directed paths}. Directed spaces that are topologically trivial may
    have non-trivial spaces of directed paths, which means that information is
    lost when the direction of these topological spaces is ignored. We define a
    notion of  directed collapsibility in the setting of a directed Euclidean
    cubical complex using the {spaces of directed paths} of the underlying
    directed topological space relative to an initial or a final vertex. In
    addition, we give sufficient conditions for a directed Euclidean cubical
    complex to have a contractible or a connected space of directed paths from a
    fixed initial vertex. We also give sufficient conditions for the path space
    between two vertices in a Euclidean cubical complex to be disconnected. Our results 
    have applications to speeding up the verification process of concurrent
    programming and to understanding partial executions in concurrent programs.
}

\section{Introduction}\label{sec:intro}

Spaces that are equipped with a direction have only recently been given more
attention from a topological point of view. The spaces of directed
paths are the defining feature for distinguishing different directed spaces.
 One reason for studying directed spaces is their application to the modeling of concurrent programs where standard algebraic topology does not provide the tools needed~\cite{lisbeth}. Concurrent programming is used when multiple processes need to access shared resources.
Directed spaces are models for concurrent
programs where paths
respecting the time directions represent executions of programs. In such models,
executions are
equivalent if their execution paths are homotopic through a family of directed paths. This observation has already led to new insights and algorithms.  For instance, verification of concurrent programs
is simplified by verifying one execution from each connected component of the space of directed paths; see  \cite{lisbeth} and \cite{FGR}.

While equivalence of executions is clearly stated in concurrent programming, equivalence of
the directed topological spaces themselves is not well understood.  Directed versions of homotopy
groups and homology groups are not agreed upon. Directed homeomorphism is too strong, whereas directed homotopy equivalence is often too
weak, to preserve the properties of the concurrent programs. In classical
(undirected) topology,
the concept of simplifying a space by a sequence of
collapses goes back to J.H.C.\ Whitehead~\cite{whitehead}, and
has been studied in \cite{barmak-minian:2012, forman:2001}, among others.
However, a definition for a directed collapse of a Euclidean
cubical complex that preserves spaces of directed paths is notably missing from
the literature.



In this article, we consider spaces of directed paths in Euclidean cubical
complexes, which corresponds to concurrent programs without loops. Our objects of study are spaces of directed paths relative to a fixed pair of endpoints.  We show how
local information of the past links of vertices in a Euclidean cubical complex
can provide global information on the spaces of directed paths. As an example, our results are applied to study the spaces of directed paths in the well-known
dining philosophers problem.
Furthermore, we define directed collapse so that
a directed collapse of a Euclidean cubical complex preserves the relevant spaces of
directed paths in the original complex. Our theoretical work has applications to
simplifying verification of concurrent programs and better understanding partial
executions in concurrent programs.

In this article we begin in~\secref{motivatingexamples} by illustrating two 
motivating examples of how the execution of concurrent programs can be modeled 
by Euclidean cubical complexes and directed path spaces.
In \secref{pastlinks} we introduce the notions of spaces of directed paths and
Euclidean cubical complexes. Given the directed structure of these Euclidean 
cubical complexes, we do not consider the link of a vertex but the past link of a
vertex. In \secref{sufficient} we give results on the topology of the spaces of 
directed paths from an initial vertex to other vertices in terms of past links.
\thmref{contractibility} gives sufficient conditions on the past links of every
vertex of a complex so that spaces of directed paths are contractible. 
\thmref{connected} gives conditions that are sufficient for the spaces of directed 
paths to be connected. We provide a class of examples that satisfy~\thmref{connected}.  
In \thmref{obstructions} we give sufficient conditions on the past link of a 
vertex so that the space of directed paths from the initial vertex to that vertex 
is disconnected.


In \secref{dircollapse} we describe a method of collapsing one complex into a 
simpler complex while preserving the directed path spaces. This involves taking 
a pair of simplices $(\tau, \sigma)$ from a Euclidean complex $K$ with certain 
conditions on the nearby links and then collapsing $K$ into a simpler complex by 
removing $\tau$, $\sigma$ and all simplices in between.

%
\section{Concurrent Programs and Directed Path Spaces}\label{sec:motivatingexamples}

We illustrate how to organize possible executions of concurrent programs using
Euclidean cubical complexes and directed spaces. An execution is a scheduling of
the events that occur in a program in order to compute a specific task. In~\exref{diningphil} we describe the dining philosophers problem. In~\exref{swissflag}, we illustrate how to model executions of concurrent
programs in the context of the dining philosophers problem when there are two
philosophers.

\begin{example}[Dining Philosophers]\label{ex:diningphil}
The dining philosophers problem originally
formulated by E.\ Dijkstra 
    and
reformulated by T. Hoare~\cite{hoare}
illustrates issues that arise in concurrent programs. Consider $n$ philosophers
sitting at a round table ready to eat a meal. Between each pair of neighboring
philosophers is a chopstick for a total of $n$ chopsticks. Each philosopher must
eat with the two chopsticks lying directly to the left and right of her. Once the
philosopher is finished eating, she must put down both chopsticks. Since there
are only $n$ chopsticks, the philosophers must share the chopsticks in order for
all of them to eat. The dining philosopher problem is to design a concurrent program where all
$n$ philosophers are able to eat once for some amount of time.

In this case, a design of a program is a choice of actions for each
philosopher. One example of a design of a program is where each of the $n$
philosophers does the following:

\begin{itemize}
\item[1.] Wait until the right chopstick is available then pick it up.
\item[2.] Wait until the left chopstick is available then pick it up.
\item[3.] Eat for some amount of time.
\item[4.] Put down the left chopstick.
\item[5.] Put down the right chopstick.
\end{itemize}

This design has states in which every philosopher has
picked up the chopstick to her right and is waiting for the other chopstick.
Such a situation exemplifies a \emph{deadlock} in concurrent programming, an execution that gets "stuck" and never finishes.

This design of a program also has states that cannot occur. For simplicity, consider the dining philosophers problem when $n=2$. The state in which both philosophers are finished eating and one is still holding onto chopstick $a$ while the other is holding chopstick $b$ would imply that a philosopher was able to eat with only one chopstick. This is an example of an \emph{unreachable} state in
concurrent programming.

Lastly, there are executions of this program design that are correct. Any execution in
which the philosophers take turns eating alone is correct since each philosopher will finish.

The dining philosophers problem illustrates the difficulties in
designing concurrent programs. Difficulties arise since each philosopher
must use chopsticks that must be shared with the neighboring philosophers.
Analogously, in concurrent programming, multiple processes must access shared
resources that have a finite capacity.
\end{example}

The next example illustrates how to model executions of the dining philosophers problem with a Euclidean cubical complex. When there are two philosophers this is often referred
to as the \emph{Swiss Flag}.

\begin{example}[Swiss Flag]\label{ex:swissflag}
In the language of concurrent programming the two philosophers represent two processes denoted by $T_1$ and $T_2$. The two chopsticks represent shared resources denoted by
$a$ and $b$. One process is executing the program $P_aP_bV_bV_a$ and the other
process is executing the program $P_bP_aV_aV_b$. Here, $P$ means that a process
has a \emph{lock} on that resource while $V$ means that a process
\emph{releases} a resource. To model this concurrent program with a Euclidean
cubical complex, we construct a $5\times 5$ grid where the $x$-axis is labeled
by $P_aP_bV_bV_a$, each a unit apart, and the $y$-axis is labeled by $P_bP_aV_aV_b$,
each also a unit apart (see~\figref{swiss}). The region $[1,4]\times [2,3]$
represents when both $T_1$ and $T_2$ have a lock on~$a$. In
the dining philosophers problem, a single chopstick can only be held by
one philosopher at a given time. This translates to the shared resources,
$a$ and $b$, each having \emph{capacity} one, where the capacity of a resource
is the number of processes that can have access to the resource simultaneously.
We call the region $[1,4]\times [2,3]$ \emph{forbidden} since $T_1$ and $T_2$
cannot have a lock on $a$ at the same time. The region $[2,3]\times [1,4]$ represents when
both $T_1$ and $T_2$ have a lock on~$b$. This region is also forbidden. The set
complement of the interior of $[1,4]\times [2,3] \cup [2,3]\times[1,4]$ in
$[0,5]\times[0,5]$ is called the Swiss flag and is the Euclidean cubical complex
modeling this program design for the dining philosophers problem.

In general, the Euclidean cubical complex modeling a concurrent program is the
complement of the interior of the forbidden region. An execution is a directed
path from the initial point to the terminal point. Executions are equivalent if
they give the same output given the same input. In geometric terms this means
that corresponding paths are dihomotopic in the path space. In the Swiss flag
there are two distinct directed paths up to homotopy equivalence: one
corresponding to $T_1$ using the shared resources first, and the other
corresponding to $T_2$ using the shared resources first. See~\figref{swiss}.
\end{example}

\begin{figure}
    \centering
    \includegraphics[scale=1.1]{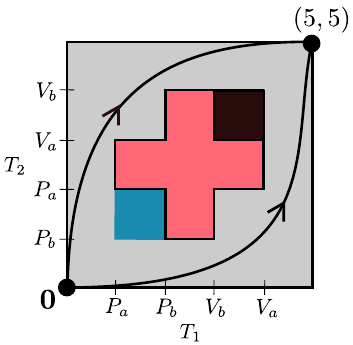}
    \caption{The Swiss Flag. The set of all executions of two
    processes, $T_1$ and~$T_2$ is called the
    \emph{state-space}.
    The pink region $F$ is the
    forbidden region.
    Any bi-monotone path outside of $F$ is a possible execution. There are two regions in the state space that are of
particular interest. The black region is the set of all unreachable states. The blue region is the set of all states that are doomed. A state is doomed if any path starting at that state leads to a deadlock. The black curves in the figure are two possible paths in this directed space.
}\label{fig:swiss}
\end{figure}

\section{Past Links as Obstructions}\label{sec:pastlinks}

In this section, we introduce the notions of spaces of directed paths and
Euclidean cubical complexes. The (relative) past link of a vertex of a Euclidean
cubical complex is defined as a simplicial complex. Studying the contractibility
and connectedness of past links gives us insight on the contractibility and
connectedness of certain spaces of directed paths.
\begin{definition}[d-space] \label{def:dspace}
    A \emph{d-space} is a pair $(X, \di{P}(X))$, where
    $X$ is a topological space and $\di{P}(X) \subseteq P(X):=X^{[0,1]}$
    is a family of paths on $X$ (called \emph{dipaths}) that
    is closed under non-decreasing reparametrizations and concatenations, and
    contains all constant paths.

    For every $x,y$ in $X$, let $\di{P}_x^y(X)$ be the family of \emph{dipaths
    from $x$ to $y$}:

    $$\di{P}_x^y(X) :=\{\alpha \in \di{P}(X): \alpha(0)=x \text{ and } \alpha(1)=y\}.$$

\end{definition}
In particular, consider the following directed space: the \emph{directed real line} $\di{\mathbb{R}}$ is the directed space constructed from the real line whose family of dipaths $\di{P}(\mathbb{R})$  consists of all non-decreasing paths. The \emph{Euclidean space} $\di{\mathbb{R}^n}$ is the $n$-fold product $\di{\mathbb{R}}\times\cdots\times \di{\mathbb{R}}$ with family of dipaths the $n$-fold product $\di{P}(\mathbb{R}^n)=\di{P}(\mathbb{R})\times\cdots\times \di{P}(\mathbb{R})$.

Furthermore, we can solely focus on the family of dipaths in a d-space and endow
it with the compact open topology.

\begin{definition}[Space of Directed Paths]\label{def:dps}
In a d-space $(X, \di{P}(X))$, the \emph{space of directed paths} from~$x$ to $y$ is the family $\di{P}_x^y(X)$ with the compact open topology.
\end{definition}
By topologizing the space of directed paths, we may now use
topological reasoning and comparison. Since $\di{P}_{x}^{y}(X)$ does not have directionality,
contractibility and other topological features are defined as in the
classical case. Moreover, observe that the set~$\di{P}_{x}^{y}(X)$ might have cardinality of the continuum, but is
considered trivial if it is homotopy equivalent to a~point.

The d-spaces that we consider in this article are constructed from Euclidean cubical complexes. Let $\mathbf{p}=(p_1,\dots,p_n),
\mathbf{q}=(q_1,\dots, q_n)\in \mathbb{R}^n$. We write $\mathbf{p}\preceq
\mathbf{q}$ if and only if $p_i \leq q_i$ for all $i=1,\dots, n$.
Furthermore, we denote by $\mathbf{q} -
\mathbf{p}:=(q_1-p_1,\dots,q_n-p_n)$ the component-wise difference between
$\mathbf{q}$ and $\mathbf{p}$, $|\mathbf{p}|:=\sum_{i=1}^n p_i$ is the
element-wise sum, or one-norm, of $\mathbf{p}$. Similarly to the one-dimensional case, the interval
$[\mathbf{p},\mathbf{q}]$ is defined as $\{\mathbf{x}\in\mathbb{R}^n :
\mathbf{p}\preceq \mathbf{x}\preceq \mathbf{q}\}$.

\begin{definition}[Euclidean Cubical Complex]
    Let $\mathbf{p},\mathbf{q}\in \mathbb{R}^n$. If
    $\mathbf{q},\mathbf{p}\in\mathbb{Z}^n$ and $\mathbf{q}-\mathbf{p}\in
    \{0,1\}^n$, then the interval $[\mathbf{p},\mathbf{q}]$ is an \emph{elementary cube in $\mathbb{R}^n$} of dimension $|\mathbf{q}-\mathbf{p}|$. A  \emph{Euclidean cubical complex} $K \subseteq \mathbb{R}^n$ is the union of elementary cubes.
\end{definition}
\begin{remark} A Euclidean cubical complex $K$ is a subset of $\mathbb{R}^n$ and
    it has an associated abstract cubical complex. By a slight abuse of
    notation, we do not distinguish these.
\end{remark}
Every cubical complex $K$ inherits the directed structure from the Euclidean space
$\di{\mathbb{R}^n}$, described after \defref{dspace}. An elementary cube of dimension $d$ is called a $d$-cube.
The  $\emph{m}$-skeleton of $K$, denoted by $K_m$, is the union of all elementary
    cubes contained in~$K
$ that have dimension less than or equal to $m$. The elements of the zero-skeleton are called the vertices
of $K$. A vertex $\mathbf{w}\in K_0$ is said to be \emph{minimal} (resp.,
    \emph{maximal})  if $\mathbf{w} \preceq \mathbf{v}$  (resp.,~$\mathbf{w}
    \succeq \mathbf{v}$) for
every vertex $\mathbf{v}\in K_0$.

Following \cite{ziemianski2016execution}, we define the (relative) past link of a vertex of a Euclidean cubical complex as a simplicial complex.
Let $\Delta^{n-1}$ denote the complete simplicial complex with vertices
$\{1,\dots,n\}$. Simplices of~$\Delta^{n-1}$ is  be identified with elements
$\mathbf{j}\in \{0,1\}^n$. That is, every subset $S \subseteq \{1,\dots,n\}$ is
mapped to the $n$-tuple with entry $1$ in the $k$-th position if~$k$ belongs to
$S$ and $0$ otherwise. The topological space associated to the simplicial
complex~$\Delta^{n-1}$ is the one given by its geometric~realization.

\begin{definition}[Past Link]\label{def:pl} In a Euclidean cubical complex $K$ in $\mathbb{R}^n$, the
    \emph{past link}, $lk^-_{K,\vecw}(\vecv)$, of a vertex $\mathbf{v}$ with respect to
    a vertex $\mathbf{w}$ is the simplicial subcomplex of $\Delta^{n-1}$ defined
    as follows: $\mathbf{j}\in lk^-_{K,\vecw}(\vecv) $ if and only if
    $[\mathbf{v-j},\mathbf{v}]\subseteq K\cap
    [\mathbf{w},\mathbf{v}]$.
\end{definition}

\begin{remark}
    While $K$ is a \emph{cubical} complex, the past link of a vertex in $K$ is always a
    \emph{simplicial}~complex.
\end{remark}

\begin{remark}
{Often the vertex $\mathbf{w}$ and the complex $K$ are understood. In this case we  denote the past link of $\mathbf{v}$ by $lk^-(\vecv)$.
}
\end{remark}

\begin{remark}
{Other definitions of the (past) link are
found in the literature. Unlike \defref{pl}, these are usually subcomplexes of $K$.
However, they are homeomorphic to the (past) link of \defref{pl}.}
\end{remark}

In the following example, we show that vertices of a Euclidean
cubical complex exist that have different past links with respect to two different
vertices. We consider as a Euclidean cubical complex the open top
box
(\figref{PastLinksOpenBox}) and the past links of the vertex $\mathbf{v} = (1,1,1)$ with respect to the vertices $\mathbf{w}=\mathbf{0}$ and $\mathbf{w'}=(0,0,1)$.

\begin{example}[Open Top Box]\label{ex:otb}

    Let $L \subset \mathbb{R}^3$ be the Euclidean cubical complex consisting of
    all of the edges and vertices in the elementary cube $[\mathbf{0},
    \vecv]$ and five of the six two-cubes, omitting the elementary two-cube
    $[(0,0,1), \mathbf{v}]$, i.e., the top of the box.  Because the elementary
    one-cube $
    [\mathbf{v} - (0,0,1), \mathbf{v}]
    \subseteq L \cap [\mathbf{0}, \mathbf{v}] = L$, $lk^-_{L,\mathbf{0}}(\vecv)$ contains the vertex in $\Delta^{2}$
    corresponding to $\mathbf{j} = (0,0,1)$.  Similarly, because the elementary
    two-cube $
    [\mathbf{v}- (0,1,1), \mathbf{v}] \subseteq L$, $lk^-_{L,\mathbf{0}}(\vecv)$ contains the edge
    in $\Delta^{2}$ corresponding to   $\mathbf{j} = (0,1,1)$.  However, because the elementary
    two-cube $[\mathbf{v}- (1,1,0), \mathbf{v}]$ is not contained in $L$,  $lk^-_{L,\mathbf{0}}(\vecv)
    $ does not include the edge corresponding to $\mathbf{j} = (1,1,0)$.  Instead taking the 	   initial vertex to be
    $\mathbf{w} = (0,0,1)$, we get that $lk^-_{L,\mathbf{w}}(\vecv)$ consists of the two
    vertices corresponding to $\mathbf{j} = (0,1,0)$ and $\mathbf{j'} = (1,0,0)$. See
    \figref{PastLinksOpenBox}.

\begin{figure}[h]
    \centering
    {\includegraphics[height=1in]{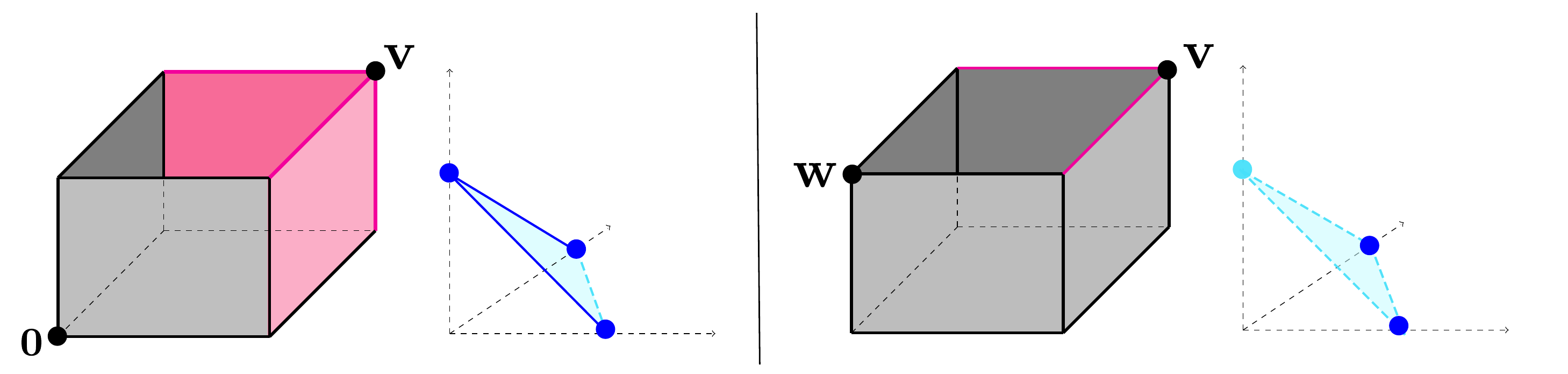}}
    \caption{The Open Top Box. Left: the open top box and the geometric realization of the past link of the
    red vertex $\mathbf{v}=(1,1,1)$ with respect to the black vertex $\mathbf{0}$. The geometric
    realization of the
    simplicial complex $lk^-_{L,\mathbf{0}}(\vecv)$ contains two edges of a triangle,
    since the two red faces are included in~$[\mathbf{0}, \mathbf{v}]$ and three vertices, since
    the three red edges are included in~$[\mathbf{0}, \mathbf{v}]$. Right: the
    open top box and the geometric realization of the past link of the red vertex
    $\mathbf{v}=(1,1,1)$ with respect to the black vertex $\mathbf{w}=(0,0,1)$. The
    geometric realization of the simplicial complex $lk^-_{L,\mathbf{w}}(\vecv)$ consists
    only of two vertices of a triangle, since the two red edges are included in~$[\mathbf{w},
    \mathbf{v}]$.}
    \label{fig:PastLinksOpenBox}
\end{figure}

\end{example}

\section{The Relationship Between Past Links and Path Spaces}\label{sec:sufficient}

In this section, we illustrate how to use past links to study spaces of directed
paths with an initial vertex of $\mathbf{0}$. In particular, the contractibility
and connectedness of all past links guarantees the contractibility and connectedness
of spaces of directed paths. We also provide a partial converse to the
result concerning connectedness.


\begin{theorem}[Contractibility]\label{thm:contractibility} Let $K\subset \R^n$ be a Euclidean
    cubical complex with minimal vertex $\mathbf{0}$. Suppose for
    all $\mathbf{k}\in K_0$, the past link $lk^{-}_{\mathbf{0}}(\mathbf{k})$ is contractible or empty. Then, all
    spaces of directed paths $\di{P}_{\mathbf{0}}^\mathbf{k}(K)$ are
    contractible.
\end{theorem}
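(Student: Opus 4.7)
I would argue by induction on $|\mathbf{k}|$, the one-norm of $\mathbf{k}$. The base case $\mathbf{k}=\mathbf{0}$ is immediate, since $\di{P}_\mathbf{0}^\mathbf{0}(K)=\{c_\mathbf{0}\}$ is a single constant path and hence contractible. For the inductive step, I would assume that $\di{P}_\mathbf{0}^{\mathbf{k}'}(K)$ is contractible for every vertex $\mathbf{k}'\in K_0$ with $|\mathbf{k}'|<|\mathbf{k}|$, and deduce the same conclusion at $\mathbf{k}$, using the hypothesis that $lk^{-}_{\mathbf{0}}(\mathbf{k})$ is contractible.

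The engine of the inductive step is a good cover of $\di{P}_\mathbf{0}^\mathbf{k}(K)$ indexed by the maximal simplices of $lk^{-}_{\mathbf{0}}(\mathbf{k})$. For each maximal simplex $\mathbf{j}$, set $B_\mathbf{j}:=[\mathbf{k}-\mathbf{j},\mathbf{k}]\subseteq K$ and define $V_\mathbf{j}$ to be the set of dipaths $\alpha\in\di{P}_\mathbf{0}^\mathbf{k}(K)$ whose tail is confined to $B_\mathbf{j}$, i.e.\ $\alpha([t,1])\subseteq B_\mathbf{j}$ for some $t<1$. Since every dipath ending at $\mathbf{k}$ must eventually lie in some local past cube of $\mathbf{k}$, and each such cube is contained in a $B_\mathbf{j}$, the collection $\{V_\mathbf{j}\}$ covers $\di{P}_\mathbf{0}^\mathbf{k}(K)$. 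A concatenation-and-reparametrization deformation retraction then yields
$$V_\mathbf{j}\;\simeq\;\di{P}_\mathbf{0}^{\mathbf{k}-\mathbf{j}}(K)\;\times\;\di{P}_{\mathbf{k}-\mathbf{j}}^{\mathbf{k}}(B_\mathbf{j}),$$
and both factors are contractible: the first by the inductive hypothesis (since $|\mathbf{k}-\mathbf{j}|<|\mathbf{k}|$), and the second because dipath spaces in a cube are contractible. An analogous decomposition handles intersections $V_{\mathbf{j}_1}\cap\cdots\cap V_{\mathbf{j}_s}$, which are governed by the meet $\mathbf{j}^{*}:=\mathbf{j}_1\wedge\cdots\wedge\mathbf{j}_s$ (the coordinate-wise minimum) and are contractible whenever $\mathbf{j}^{*}$ is a nonzero simplex of $lk^{-}_{\mathbf{0}}(\mathbf{k})$.

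With the good cover in place, the Nerve Lemma identifies $\di{P}_\mathbf{0}^\mathbf{k}(K)$ up to homotopy with the nerve of $\{V_\mathbf{j}\}$, which is precisely the nerve of the cover of $lk^{-}_{\mathbf{0}}(\mathbf{k})$ by its own maximal simplices. A second application of the Nerve Lemma to the simplicial complex $lk^{-}_{\mathbf{0}}(\mathbf{k})$ shows that this nerve is homotopy equivalent to $lk^{-}_{\mathbf{0}}(\mathbf{k})$, which is contractible by hypothesis. Chaining the two equivalences closes the induction.

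The main obstacle I foresee is making the $V_\mathbf{j}$ decomposition genuinely a good cover. Dipaths in $\di{P}(K)$ may pause at $\mathbf{k}$, enter $B_\mathbf{j}$ only through a common face with another $B_{\mathbf{j}'}$, or oscillate near $\mathbf{k}$, any of which can make intersections larger than what the meet $\mathbf{j}^{*}$ predicts. Resolving this requires either restricting to natural (non-pausing) dipaths, which form a deformation retract of $\di{P}_\mathbf{0}^\mathbf{k}(K)$, or arranging the $V_\mathbf{j}$ so that the entry time into $B_\mathbf{j}$ can be chosen continuously in $\alpha$ and compatibly across intersections. The prod-simplicial CW machinery developed in \cite{ziemianski2016execution} is precisely the technical input needed to make this decomposition rigorous and to underwrite the concatenation-reparametrization retraction used above.
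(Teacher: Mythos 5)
Your induction is the same induction the paper runs: both arguments reduce the theorem to the single claim that, once $\di{P}_{\mathbf{0}}^{\mathbf{k}-\mathbf{j}}(K)$ is contractible for every $\mathbf{j}\in lk^{-}_{\mathbf{0}}(\mathbf{k})$, the space $\di{P}_{\mathbf{0}}^{\mathbf{k}}(K)$ is homotopy equivalent to $lk^{-}_{\mathbf{0}}(\mathbf{k})$. The difference is that the paper obtains this claim in one line by citing Proposition 5.3 of \cite{ziemianski2016execution} as a black box, whereas you sketch a proof of it from scratch via a nerve-of-a-cover argument. That buys self-containedness at the cost of having to make the cover genuinely good, and this is where the real work lives. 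Two points in your sketch are not merely technical polish. First, the sets $V_{\mathbf{j}}$ as you define them are closed-ish rather than open in the compact-open topology (a path whose tail runs along a face shared with another cube has arbitrarily close neighbors leaving $B_{\mathbf{j}}$), so the open-cover form of the Nerve Lemma does not apply directly. Second, and more seriously, paths that reach $\mathbf{k}$ strictly before time $1$ lie in \emph{every} $V_{\mathbf{j}}$, so $V_{\mathbf{j}_1}\cap V_{\mathbf{j}_2}$ is nonempty (indeed homotopy equivalent to the whole path space) even when $\mathbf{j}_1\wedge\mathbf{j}_2=\mathbf{0}$; the nerve of your cover is then a full simplex, not the nerve of the maximal simplices of $lk^{-}_{\mathbf{0}}(\mathbf{k})$, and the argument would ``prove'' contractibility with no hypothesis on the past link. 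You do flag exactly this failure mode and name the correct repair (restricting to non-pausing/natural paths, or the entry-time decomposition of \cite{ziemianski2016execution}), so the gap is acknowledged rather than hidden, but as written the inductive step is not yet a proof --- it is an outline of how one would reprove the cited proposition. One further shared caveat: when some $\mathbf{k}-\mathbf{j}$ is unreachable, $\di{P}_{\mathbf{0}}^{\mathbf{k}-\mathbf{j}}(K)$ is empty rather than contractible and the corresponding $V_{\mathbf{j}}$ drops out of the cover; both your sketch and the paper's proof treat this case only implicitly, so you are in good company there.
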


\begin{proof} 
 By \cite[Proposition 5.3]{ziemianski2016execution}, if
    $\di{P}_{\mathbf{0}}^\mathbf{k-j}(K)$ is contractible for all $\mathbf{j}\in
    \{0,1\}^n$, $\mathbf{j}\neq \mathbf{0}$, and~$\mathbf{j}\in lk^{-}(\mathbf{k})$, then $\di{P}_{\mathbf{0}}^\mathbf{k}(K)$ is homotopy equivalent to $lk^{-}(\mathbf{k})$. Hence, it suffices to see that all these spaces are contractible.
This follows by structural induction on the partial order on vertices in $K$. The start is at
$\di{P}_\mathbf{0}^{\mathbf{0}+\mathbf{e}_i}(K)$, where $\mathbf{e}_i$ is the
    $i$-th unit vector, and $\mathbf{0}+\mathbf{e}_i\in K_0$.
    If~$[\mathbf{0},{\mathbf{0}+\mathbf{e}_i}]\in K$, then
    $\di{P}_\mathbf{0}^{\mathbf{0}+\mathbf{e}_i}(K)$ is contractible. Otherwise,
    it is empty and the corresponding~$\mathbf{j}$ is not in the past link.
\end{proof}

Now, we give an analogous sufficient condition for when spaces of directed paths
are connected. We provide two different proofs of~\thmref{connected}. The first
proof shows how we can use~\cite[Prop.\ 2.20]{raussen_2000} to get our desired
result. The second proof uses notions from category theory and is based on the
fact that the colimit of connected spaces over a connected category is connected.

\begin{theorem}[Connectedness]\label{thm:connected}
    With $K$ as above, suppose all past links $lk^{-}_{\mathbf{0}}(\mathbf{k})$  of all vertices are connected. Then, for all $\mathbf{k}\in K_0$, all spaces of directed paths $\di{P}_{\mathbf{0}}^\mathbf{k}(K)$ are connected.
\end{theorem}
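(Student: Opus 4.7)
The plan is to mirror the structure of the preceding contractibility argument: perform structural induction on the partial order of vertices in $K$, with the inductive step relating $\di{P}_{\mathbf{0}}^{\mathbf{k}}(K)$ to the past link of $\mathbf{k}$ together with the smaller path spaces $\di{P}_{\mathbf{0}}^{\mathbf{k}-\mathbf{j}}(K)$ for $\mathbf{j} \in lk^-(\mathbf{k})$. The base case handles $\mathbf{k} = \mathbf{0}$ (where $\di{P}_{\mathbf{0}}^{\mathbf{0}}(K)$ is a single constant path, hence connected) and $\mathbf{k} = \mathbf{e}_i$ a unit-coordinate successor of $\mathbf{0}$ (where the past link is either non-empty, forcing the edge to lie in $K$ and the path space to be contractible, or empty, contradicting the hypothesis if $\mathbf{k}\in K_0$).

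For the first proof, I would invoke \cite[Prop.\ 2.20]{raussen_2000}. Every dipath from $\mathbf{0}$ to $\mathbf{k}$ must enter $\mathbf{k}$ through some last elementary cube $[\mathbf{k}-\mathbf{j},\mathbf{k}]$ with $\mathbf{j} \in lk^-(\mathbf{k})$. This yields a natural cover of $\di{P}_{\mathbf{0}}^{\mathbf{k}}(K)$ by subspaces $A_{\mathbf{j}}$ indexed by the maximal simplices of the past link, where each $A_{\mathbf{j}}$ deformation retracts onto (a space homotopy equivalent to) $\di{P}_{\mathbf{0}}^{\mathbf{k}-\mathbf{j}}(K)$, which is connected by the inductive hypothesis. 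Two pieces $A_{\mathbf{j}}$ and $A_{\mathbf{j}'}$ have non-empty intersection precisely when $\mathbf{j}$ and $\mathbf{j}'$ share a common face in $lk^-(\mathbf{k})$, and a standard nerve-type argument lifts the connectedness of the past link to connectedness of $\di{P}_{\mathbf{0}}^{\mathbf{k}}(K)$.

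For the second, categorical proof, I would realize $\di{P}_{\mathbf{0}}^{\mathbf{k}}(K)$ as the colimit of a diagram indexed by the face poset of $lk^-(\mathbf{k})$, with the object at $\mathbf{j}$ being (equivalent to) $\di{P}_{\mathbf{0}}^{\mathbf{k}-\mathbf{j}}(K)$ and morphisms induced by inclusions of faces. Since the face poset of a connected simplicial complex is a connected category, and a colimit of connected spaces over a connected category is connected, the inductive hypothesis immediately gives the result.

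The main obstacle will be verifying that the cover/colimit decomposition faithfully captures $\pi_0$ of the path space. Ziemianski's Prop.~5.3, used in the contractibility proof, requires contractibility of the smaller pieces in order to conclude homotopy equivalence with the past link, so one cannot apply it verbatim here. However, since we only need information at the level of $\pi_0$, a weaker statement should suffice: namely, that path components of $\di{P}_{\mathbf{0}}^{\mathbf{k}}(K)$ are detected by path components of the nerve of the cover. Making this rigorous—either by verifying the hypotheses of Raussen's decomposition in the cubical setting or by constructing the colimit and its comparison map explicitly—is the technical heart of the argument.
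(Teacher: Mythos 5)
Your proposal is correct in substance, and your second, categorical argument is essentially identical to the paper's second proof: the paper invokes \cite[Prop.~2.3 and Eq.~2.2]{RZ} to express $\di{P}_{\mathbf{0}}^{\mathbf{k}}(K)$ as a colimit of the spaces $\di{P}_{\mathbf{0}}^{\mathbf{k}-\mathbf{j}}(K)$ over an indexing category whose geometric realization is the past link, and then uses exactly your observation that a colimit of connected spaces over a connected category is connected, together with the same induction starting from the edges at $\mathbf{0}$. The technical point you flag---that Ziemianski's Prop.~5.3 cannot be applied verbatim because it requires contractibility of the smaller pieces---is genuine, and the paper resolves it just as you suggest, by citing the colimit description from \cite{RZ} rather than constructing the decomposition by hand. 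Where you diverge is in the first proof: you propose to build a cover of the path space by subspaces $A_{\mathbf{j}}$ indexed by the past link and run a nerve-type argument plus induction, whereas the paper's first proof does no induction at all. It simply shows that the hypothesis ``all past links are connected'' is equivalent to the local condition of \cite[Prop.~2.20]{raussen_2000} (every pair of incoming edges at a vertex is joined by a chain of incoming $2$-cells), and then lets Raussen's proposition deliver the global connectedness conclusion directly. Your version buys a more self-contained decomposition at the cost of re-deriving what the cited result already provides; the paper's version is shorter but leans entirely on the external theorem. Your identification of the $\pi_0$-detection step as the technical heart is accurate---that is precisely where the citation to \cite{RZ} does the work.
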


In this first proof we show that~\cite[Prop.\ 2.20]{raussen_2000} is an equivalent condition to all past links being connected.

\begin{proof}
    In~\cite[Prop.\ 2.20]{raussen_2000},  a local condition is given that
    ensures
    that all spaces of directed paths to a certain final point are connected. Here, we explain how the local
    condition is equivalent to all past links being connected.
    Their condition is in terms of the local future; however, we reinterpret
    this in
    terms of local past instead of local future.
     Since we consider all spaces of directed paths \emph{from} a point (as opposed to \emph{to} a point), this is the right setting we should look at. The
    local condition is the following: for each vertex, $\vecv$ and all
    pairs of edges $[\vecv-\mathbf{e}_r,\vecv]$, $[\vecv-\mathbf{e}_s,\vecv]$ in $K$, there is a sequence of
    two-cells $[\vecv-\mathbf{e}_{k_i}-\mathbf{e}_{l_i},\vecv]$, $i=1,\ldots , m$ all in $K$ s.t.
    $l_i=k_{i+1}$ $i=1,\ldots, m-1$, $k_1=r$ and $l_m=s$.
    Now, we show that this local condition is equivalent to ours.
    In the past link
    considered as a simplicial complex, such a sequence of two-cells corresponds to a sequence of edges
    from the vertex $r$ to the vertex $s$. For $x,y\in lk^-(\vecv)$,
  they are both connected to a vertex via a line. And those vertices
  are connected. Hence, the past link is connected.

    Vice versa: Suppose $lk^-(\vecv)$ is connected. Let $p,q$ be vertices in
    $lk^-(\vecv)$ and let $\gamma: I\to lk^-(\vecv)\in \Delta^{n-1}$ be a path
    from $p$ to $q$. The sequence of simplices traversed by $\gamma$,
    $S_1,S_2,\ldots,S_k$, satisfies $S_i\cap S_{i+1}\neq\emptyset$. Moreover,
    the intersection is a simplex. Let $p_i\in S_i\cap S_{i+1}$. A sequence of
    pairwise connected edges connecting $p$ to $q$ is constructed by such
    sequences from $p_i$ to $p_{i+1}$ in $S_{i+1}$ thus providing a sequence of
    two-cells similar to the requirement in \cite{raussen_2000}.
    Hence, by \cite{raussen_2000}, if all past links of all vertices are connected, then all $\di{P}_{\mathbf{0}}^\mathbf{k}$ are connected
\end{proof}

This second proof of \thmref{connected} has a more categorical flavor.

\begin{proof}
   We give a more categorical argument which is closer to the proof of \thmref{contractibility}.
    In \cite[Prop 2.3 and Equation 2.2]{RZ}, the space of directed paths $\di{P}_{\mathbf{0}}^\mathbf{k}$ is given as a colimit over $\di{P}_{\mathbf{0}}^\mathbf{k-j}$. The indexing category is $\cal{J}_K$ with objects
$\{\mathbf{j}\in\{0,1\}^n: [\mathbf{k}-\mathbf{j}]\subseteq K\}$ and morphisms $\mathbf{j}\to\mathbf{j'}$ for $\mathbf{j}\geq\mathbf{j'}$ given by inclusion of the simplex $\Delta^{\mathbf{j}}\subset\Delta^{\mathbf{j'}}$. The geometric realization of the index category is the past link which with our requirements is connected. The colimit of connected spaces over a connected category is connected. Hence, by induction as above, beginning with edges from $\mathbf{0}$,  $\di{P}_{\mathbf{0}}^\mathbf{k-j}$ are all connected and the conclusion follows.
\end{proof}

\begin{remark} Our conjecture is that similar results for $k$-connected past links should follow from the $k$-connected Nerve Lemma.
\end{remark}

\begin{remark}
   Note that the statements of both \thmref{contractibility} and \thmref{connected} concern past links and path spaces defined with respect to a fixed initial vertex. Past links depend on their initial vertex.  As an example, consider the open top box, \exref{otb} . Then all past links in $L$ with
    respect to the initial vertex $\mathbf{0}$ are contractible,
    but $\di{P}_{\mathbf{w'}}^{\mathbf{v}}(L)$, where $\vecw'=(0,0,1)$ and
    $\vecv=(1,1,1)$, is not contractible. It is in fact two points. Note, this
    does not contradict \thmref{contractibility}, which
    only asserts that $\di{P}_{\mathbf{0}}^{\mathbf{v}}(L)$ is~contractible. See \figref{PastLinksOpenBox}.
\end{remark}

We now show how~\thmref{contractibility} and~\thmref{connected} can be used to
study the spaces of the directed paths in slight modifications of the dining
philosophers problem.

\begin{example}[Three Concurrent Processes Executing the Same Program]\label{ex:diningphil_same}
We consider a modification of \exref{diningphil} where we have three processes and two
resources each with capacity two. All processes are executing the program $P_aP_bV_bV_a$. The Euclidean
cubical complex modeling this situation has three dimensions, each representing the program of a process. Since each resource has
capacity two, it is not possible to have a three way lock on any of them. The three processes have a lock on $a$ in the region $[P_a,V_a]^{\times 3}$,
which is the cube $[(1,1,1),(4,4,4)]$. Similarly, the three processes have a
lock on $b$ in the region $[P_b,V_b]^{\times 3}$ which is the cube
$[(2,2,2),(3,3,3)]$. The forbidden region is the union of these two
sets which is $[(1,1,1),(4,4,4)]$. We can model this concurrent program as
a three-dimensional Euclidean cubical complex and the forbidden region is the inner $3\times 3\times 3$ cube.

In order to analyze the connectedness and contractibility of the spaces of
directed paths with initial vertex $\mathbf{0}$, we study the past
links of the vertices of $K$. First, we
show that not all past links are contractible. Let $\vecv=(4,4,4)$. Then,
$lk^-_{K,\textbf{0}}(\mathbf{v})$ consists of all $\mathbf{j}\in \{0,1\}^3$
except $(1,1,1)$. This is because the cube $[(3,3,3),(4,4,4)]$ is not contained
in $K$, but $[\vecv-\mathbf{j},\vecv]\subset K$ for all other $\mathbf{j}$.
Therefore, $lk^-_{K,\textbf{0}}(\vecv)$ is the boundary of the two simplex (see~\figref{diningphil_same}). Because the boundary of the two simplex is not contractible, the hypothesis
of~\thmref{contractibility} is not satisfied. Hence, we cannot
use~\thmref{contractibility} to study the contractibility of the spaces of
directed paths. 

\begin{figure}[h]
    \centering
    {\includegraphics[height=1.5in]{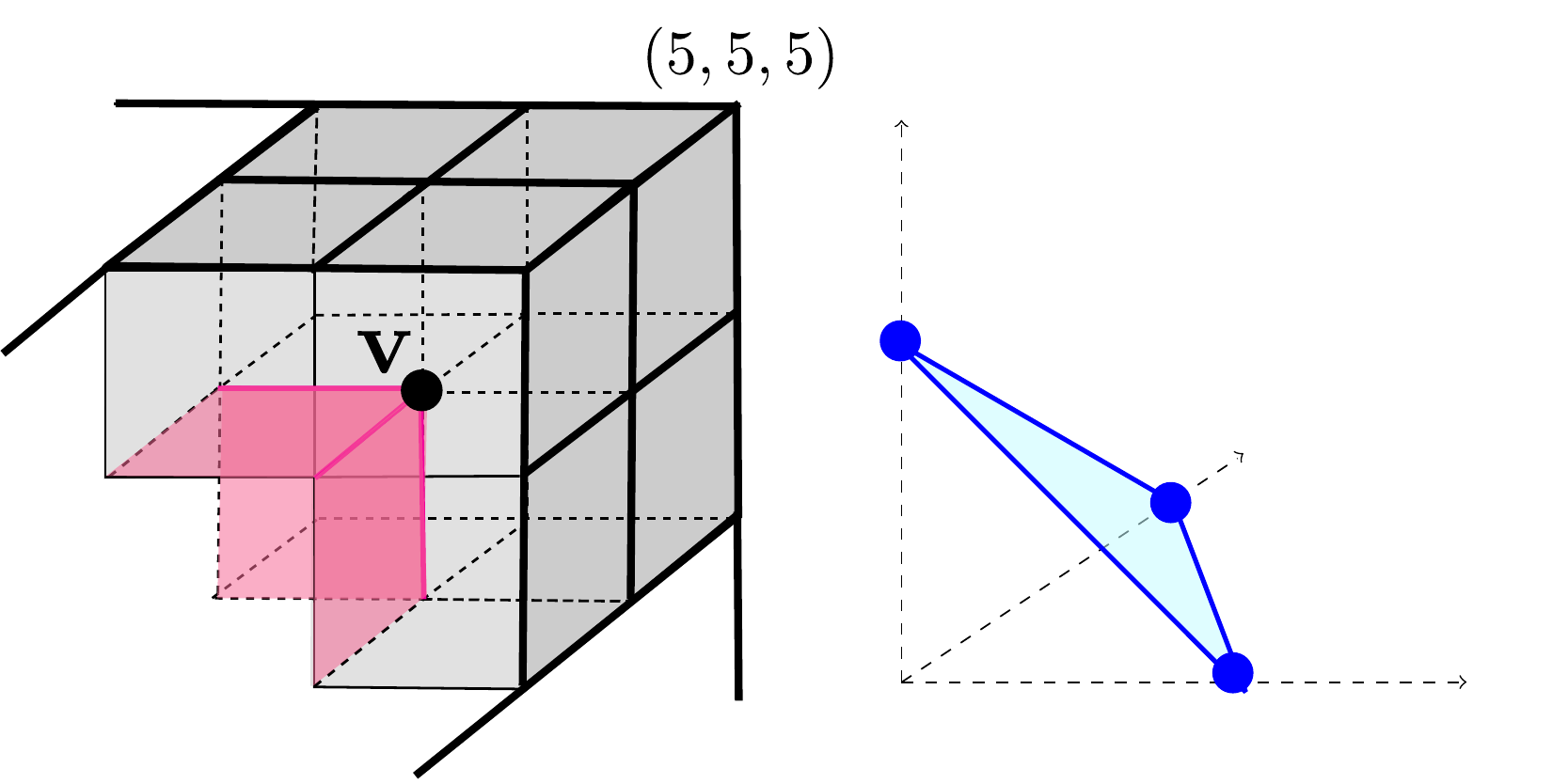}}
    \caption{Three processes, same program. Illustrating $lk^-_{K,\mathbf{0}}(\mathbf{v})$ where $K$ is the cube $[\mathbf{0},(5,5,5)]$ minus the inner cube, $[(1,1,1),(4,4,4)]$, and $\vecv=(4,4,4)$. The geometric realization of the simplicial complex $lk^-_{K,\textbf{0}}(\mathbf{v})$ is the boundary of the two simplex since the three pink faces and edges are included in $[\mathbf{0},\vecv]$.}
    \label{fig:diningphil_same}
\end{figure}

Next, we show that all past links are connected. If we directly compute the past link~$lk^-_{K,\mathbf{0}}(\mathbf{k})$ for all $\mathbf{k}\in K_0$,
we find that the past link consists of either a zero simplex, one simplex, the
boundary of the two simplex, or a two simplex. All these past links are
connected. \thmref{connected} implies that for all $\mathbf{k}\in K_0$, the space
of directed paths,~$\di{P}_{\mathbf{0}}^{\mathbf{k}}(K)$ is connected.

We can generalize this example to $n$ processes and two resources
with capacity $n-1$ where all processes are executing the program $P_aP_bV_bV_a.$
For all $n$~\thmref{connected} shows
that all spaces of directed paths are connected.
\end{example}

The converse of~\thmref{connected} is not true.  To see this, and give the conditions under which the converse does hold, we need to introduce the following definition:

\begin{definition}[Reachable]\label{def:reachable}
The point $x\in K$ is \emph{reachable} from $\mathbf{w}\in K_0$ if there is a path from~$\mathbf{w}$ to~$x$. A subcomplex of $K$ is induced by the set of points that are reachable from a vertex $\mathbf{w}$.
\end{definition}

\begin{example}[Boundary of the $3\times 3\times 3$ Cube with Top Right Cube]\label{ex:reachability}
 Let $K$ be the Euclidean cubical complex that is the boundary of the $3\times 3\times 3$ cube along with the cube $[(2,2,2),(3,3,3)]$. Observe that all spaces of directed paths with initial vertex $\mathbf{0}$ are connected. However, $K$ has a disconnected past link at $\mathbf{v}=(3,2,2)$.  If we consider the subcomplex $\hat{K}$ that is reachable from $\mathbf{0}$, then $\hat{K}$ is the boundary of the $3\times 3\times 3$ cube. The past links of all vertices in $\hat{K}$ are connected. 
This motivates the conditions given in~\thmref{obstructions} of removing the unreachable points of a Euclidean cubical complex.  The connected components of a disconnected past link in the remaining complex can then be represented by directed paths from the initial point and not only locally.

\begin{figure}[h]
    \centering
    {\includegraphics[height=1.3in]{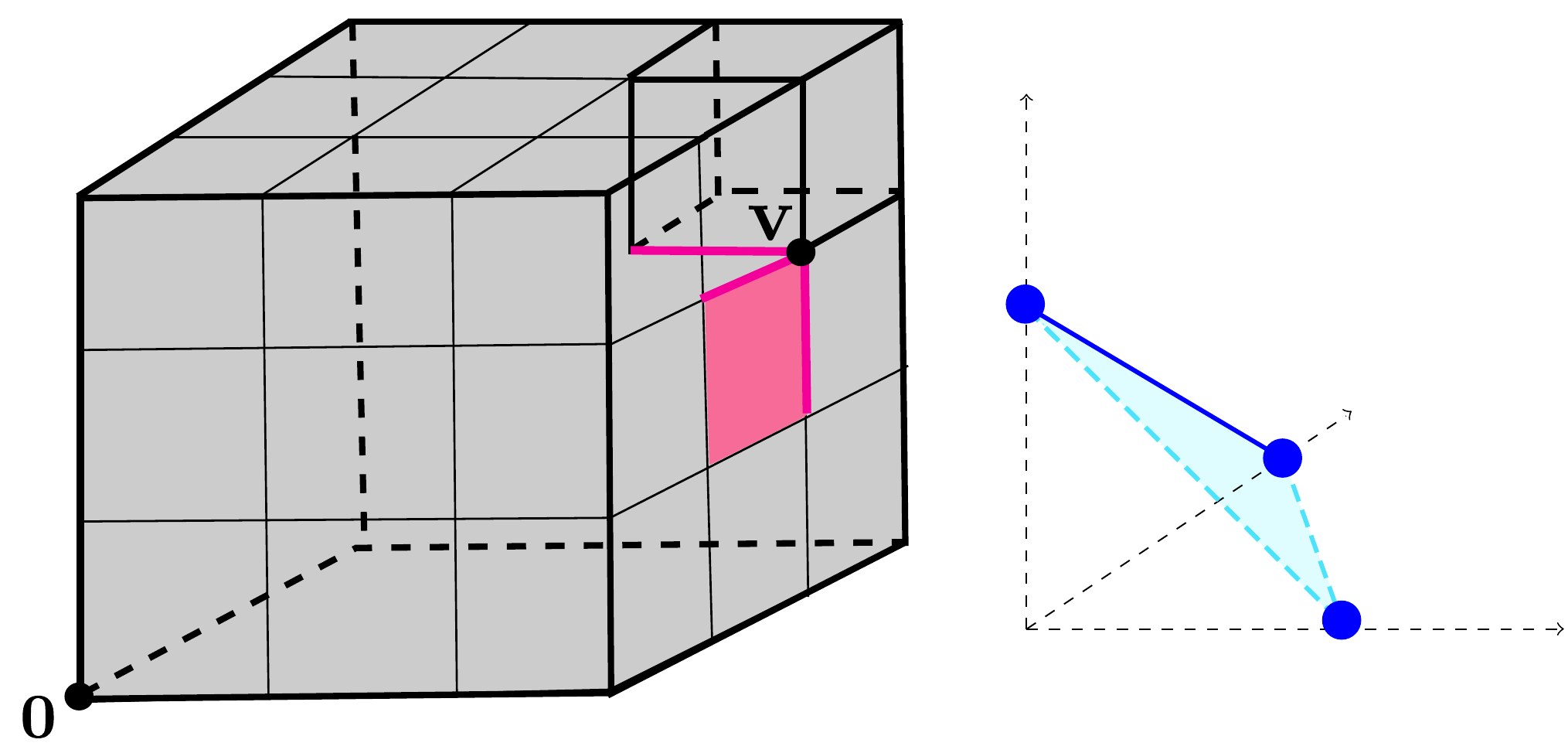}}
    \caption{Motivating reachability condition. For $K$ equal to the boundary of the $3\times 3\times 3$ cube union with $[(2,2,2),(3,3,3)]$, the geometric realization of the simplicial complex $lk^-_{K,\textbf{0}}(\mathbf{v})$ is an edge and a point since the three pink edges and one face are included in $[(0,0,0),\vecv]$. }
    \label{fig:reachex}
\end{figure}

\end{example}

\begin{theorem}[Realizing Obstructions]\label{thm:obstructions}
Let $K$ be a Euclidean cubical complex with initial vertex
$\mathbf{0}$. Let $\hat{K}\subset K$ be the subcomplex reachable from $\mathbf{0}$. If for $\mathbf{v}\in \hat{K}_0$, the past link in $\hat{K}$ is disconnected, then the path space $\di{P}_{\textbf{0}}^{\mathbf{v}}(K)$ is disconnected.
\end{theorem}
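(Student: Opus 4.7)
The plan is to follow the categorical strategy used in the second proof of \thmref{connected}. First, I reduce to the reachable subcomplex: since every point $\alpha(t)$ of a dipath $\alpha \in \di{P}_{\mathbf{0}}^{\mathbf{v}}(K)$ is reachable from $\mathbf{0}$ via $\alpha|_{[0,t]}$, the image of $\alpha$ consists of reachable points, and one checks that each cube of $K$ whose interior is met by this image has its minimal vertex reachable, hence lies in $\hat{K}$. Thus $\di{P}_{\mathbf{0}}^{\mathbf{v}}(K) = \di{P}_{\mathbf{0}}^{\mathbf{v}}(\hat{K})$, and it suffices to show the right-hand side is disconnected.

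Next, I apply the colimit description from \cite{RZ} used in the second proof of \thmref{connected} inside $\hat{K}$: the space $\di{P}_{\mathbf{0}}^{\mathbf{v}}(\hat{K})$ is the colimit of $\mathbf{j} \mapsto \di{P}_{\mathbf{0}}^{\mathbf{v}-\mathbf{j}}(\hat{K})$ over an indexing category $\mathcal{J}_{\hat{K}}$ whose geometric realization is exactly $lk^-_{\hat{K},\mathbf{0}}(\mathbf{v})$. Morphisms in $\mathcal{J}_{\hat{K}}$ correspond to face relations between simplices and so never cross between components of the realization; hence the hypothesis that the past link is disconnected yields a decomposition $\mathcal{J}_{\hat{K}} = \mathcal{J}_1 \sqcup \mathcal{J}_2$ as a coproduct of categories. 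Since colimits commute with coproducts in the indexing category, the path space splits in $\Top$ as a topological disjoint union $\mathrm{colim}_{\mathcal{J}_1} F \sqcup \mathrm{colim}_{\mathcal{J}_2} F$.

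Finally, I verify that each piece is non-empty via reachability: for any $\mathbf{j} \in \mathcal{J}_i$, the vertex $\mathbf{v}-\mathbf{j}$ lies in $\hat{K}$ and is therefore reachable from $\mathbf{0}$, so $\di{P}_{\mathbf{0}}^{\mathbf{v}-\mathbf{j}}(\hat{K})$ is non-empty and contributes a non-empty piece to the corresponding sub-colimit. Thus $\di{P}_{\mathbf{0}}^{\mathbf{v}}(K)$ is a disjoint union of two non-empty spaces and is disconnected. The main obstacle is justifying the reduction rigorously, namely confirming that a dipath in $K$ actually lies in the subcomplex $\hat{K}$ and not merely in the set of reachable points; this reduces to checking that any cube of $K$ whose interior is visited by the dipath has its minimal vertex reachable from $\mathbf{0}$, which should follow from the monotonicity of dipaths together with the cube structure. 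If this step proves delicate, a fallback strategy is to construct the ``final direction'' map $\alpha \mapsto \mathbf{j}(\alpha)$, where $\mathbf{j}(\alpha)_i = 1$ iff $\alpha_i(t) < v_i$ for all $t < 1$, and show directly that the induced map $\di{P}_{\mathbf{0}}^{\mathbf{v}}(K) \to \pi_0(lk^-_{\hat{K},\mathbf{0}}(\mathbf{v}))$ is continuous (small perturbations keep the arrival simplex within a common cube of $\hat{K}$, hence within one component) and surjects onto at least two components.
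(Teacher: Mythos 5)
Your proposal is correct in substance but follows a genuinely different route from the paper's. The paper works at the level of individual paths and argues by contradiction: it picks vertices $\mathbf{j}_1,\mathbf{j}_2$ of $lk^-_{\hat K}(\mathbf{v})$ in different components, uses reachability together with the combinatorial approximation theorem of \cite{Fajdianddi} to produce two explicit dipaths $\gamma_i$ ending along the edges $[\mathbf{v}-\mathbf{j}_i,\mathbf{v}]$, and then shows that any path $H$ joining them in $\di{P}_{\mathbf{0}}^{\mathbf{v}}(K)$ would, after approximating $H$ into the $2$-skeleton of $\hat K$ and restricting to a thin strip $(1-\epsilon,1]\times I$ near $\mathbf{v}$, yield a chain of $2$-cubes, hence an edge path in the past link connecting $\mathbf{j}_1$ to $\mathbf{j}_2$ --- a contradiction. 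You instead globalize the second proof of \thmref{connected}: the colimit of \cite{RZ} over a disconnected index category splits as a coproduct, and reachability makes each summand nonempty. This is shorter and, since $\pi_0$ preserves colimits, robust even if one only trusts the decomposition up to homotopy; what it costs is that the two dihomotopy classes are no longer exhibited explicitly, and the entire weight rests on \cite{RZ} applying verbatim to $\hat K$. Two points to tighten. First, the reduction $\di{P}_{\mathbf{0}}^{\mathbf{v}}(K)=\di{P}_{\mathbf{0}}^{\mathbf{v}}(\hat K)$, which you rightly flag as delicate, is asserted equally tersely in the paper (``since $H(t,s)$ is reachable, $H$ maps to $\hat K$''), so you are no worse off; but note that you need every cube whose interior a dipath visits to lie in $\hat K$, not merely that the trace consists of reachable points, and this does require the monotonicity argument you sketch. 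Second, you should state explicitly that the components of the indexing category biject with the components of the past link (the nerve of the poset is the barycentric subdivision of $lk^-_{\hat K}(\mathbf{v})$), since that is what converts the hypothesis into the coproduct decomposition. With those additions your argument stands as a valid, and arguably cleaner, alternative to the paper's proof.
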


\begin{proof}

    Let $\mathbf{v}$ be a vertex such that $lk^-_{K,\textbf{0}}(\mathbf{v})$ is disconnected and let $\mathbf{j}_1, \mathbf{j}_2$ be vertices in $lk_{\hat{K}}^-(v)$ in different components. The edges $[\mathbf{v}-\mathbf{j}_i, \mathbf{v}]$ are then in $\hat{K}$ and, in particular, $\mathbf{v}-\mathbf{j}_i \in \hat{K}_0$. Hence, there are paths $\mu_i: \overrightarrow{I}\to \hat{K}$ such that $\mu_i(0)=\textbf{0}$ and $\mu_i(1)=\mathbf{v}-\mathbf{j}_i$.

By \cite{Fajdianddi}, there are $\hat{\mu}_i$ which are dihomotopic to $\mu_i$ and such that $\hat{\mu}_i$ is combinatorial, i.e., a sequence of edges in $\hat{K}$. Let $\gamma_i$ be the concatenation of $\hat{\mu}_i$ with the edge $[\mathbf{v}-\mathbf{j}_i,\mathbf{v}]$.

Suppose for contradiction that $\gamma_1$ and $\gamma_2$ are connected by a path in
    $\di{P}_{\textbf{0}}^{\mathbf{v}}(K)$. Let $H:\overrightarrow{I}\times I \to
    K$ be such a path with $H(t,0)=\gamma_1(t)$ and $H(t,1)=\gamma_2(t)$. Since
    $H(t,s)$ is reachable from $\textbf{0}$, $H$ maps to~$\hat{K}$.

    By \cite{Fajdianddi}, there is a combinatorial approximation $\hat{H}: \overrightarrow{I}\times I  \to \hat{K}_2$ to the
$2$-skeleton of $\hat{K} \subset K$. Let $B$ be the open ball centered around $\mathbf{v}$ with radius $1/2$. Since $\hat{H}$
is continuous, the inverse image of $B$ under $\hat{H}$ is a neighborhood of $\{1\} \times I \subset \di{I}\times I$.  For $0<\epsilon<1/2$, this neighborhood contains a strip $(1-\epsilon,1] \times I$ (by compactness of $I$). Then $\hat{H}(1-\epsilon/2 \times I)$ gives a
path connecting the two edges $[\mathbf{v}-\mathbf{j}_i,\mathbf{v}]$. This path traverses a sequence of $2$-cubes (the
carriers). These correspond to a sequence of edges in the past link that connect $\mathbf{j}_1$ and $\mathbf{j}_2$, which
contradicts the assumption that they are in different components. Therefore, $\gamma_1$ and $\gamma_2$ correspond to two points
in $\di{P}_{\textbf{0}}^{\mathbf{v}}(K)$ that are not connected by a path.
\end{proof}

In general, the reachability condition in~\thmref{obstructions} eliminates the spurious disconnected past links that could appear in the unreachable parts of a Euclidean cubical complex.

\begin{example}\label{reachableSwissFlag}
 To see how \thmref{obstructions} can be applied, consider \exref{swissflag}, the Swiss flag. There are two vertices with disconnected past links with respect to $\mathbf{0}$ namely $(4,3)$ and $(3,4)$. These disconnected past links imply that  \thmref{connected} is inconclusive.  If the unreachable section of the Swiss flag is removed, we obtain a new Euclidean cubical complex in which the vertex $\vecv=(4,4)$ has a disconnected past link, consisting of two points.  By \thmref{obstructions}, the path space $\di{P}_{\textbf{0}}^{\mathbf{v}}(K)$ is also disconnected. In fact, $\di{P}_{\textbf{0}}^{\mathbf{v}}(K)$ has two points, representing the dihomotopy classes of paths which pass above the forbidden region, and those paths which pass below.  \end{example}

This is an example of the following: given two vertices $\mathbf{w}$ and $\mathbf{v}$ in a Euclidean cubical complex $K$, if the path space $\di{P}_{\textbf{w}}^{\mathbf{v}}(K)$ is disconnected, then it must be that some vertex in $[\mathbf{w},\mathbf{v}]$ has a disconnected past link with respect to $\mathbf{w}$ (the vertices $(4,3)$ and $(3,4)$ in the Swiss flag).  If $\mathbf{w}=\mathbf{0}$, this is the contrapositive of \thmref{connected}. If, moreover, $K$ is reachable from $\mathbf{0}$,  \thmref{obstructions} allows us to draw conclusions about the space of directed paths.

\section{Directed Collapsibility}\label{sec:dircollapse}

To simplify the underlying topological space of a d-space while preserving topological properties of the associated space of directed paths, we introduce the process of directed collapse.  The criteria we require to perform directed collapse on Euclidean cubical complexes involves the topology of the past links of the vertices of the complex.  We defined the past links as simplicial complexes that are not themselves directed, so our topological criteria are in the usual sense.

\begin{definition}[Directed Collapse]\label{def:directedCollapse}
    Let $K$ be a Euclidean cubical complex with initial vertex $\textbf{0}$. Consider $\sigma,\tau\in K$ such that
    $\tau\subsetneq\sigma$, $\sigma$ is maximal, and no other maximal cube
    contains $\tau$.  Let $K'=K\setminus \{\gamma\in
    K|\tau\subseteq\gamma\subseteq \sigma\}$. $K'$  is a \emph{directed
    (cubical) collapse} of $K$  if, for all $\vecv\in K'_0$,
            $lk^-_K(\vecv)$ is homotopy equivalent to $lk^-_{K'}(\vecv)$.
            The pair $\tau,\sigma$ is then called a \emph{collapsing pair}.

            $K'$ is a \emph{directed 0-collapse} of $K$ if for all
            $\vecv\in K_0'$,  $lk^-_K(\vecv)$ is connected if and
            only if  $lk^-_{K'}(\vecv)$ is connected.
\end{definition}
\begin{remark} As in the simplicial case, when we remove $\sigma$ from the
    abstract cubical complex, the effect on the geometric realization is to
    remove the interior of the cube corresponding to~$\sigma$.
\end{remark}
\begin{remark}
Note for finding collapsing pairs, $(\tau, \sigma)$,
    using~\defref{directedCollapse}, with the geometric realization of $\sigma$
    given by the elementary cube, $[\vecw-\vecj,\vecw]$, it is sufficient to
    only check $\vecv\in K'_0$ such that $\vecv=\vecw-\vecj'$ where
    $\vecj-\vecj'>0$. Otherwise the past links, $lk^-_K(\vecv)$ and
    $lk^-_{K'}(\vecv)$, are~equal.
\end{remark}



\begin{definition}[Past Link Obstruction] \label{def:linkobstruction}
    Let $\vecw\in K_0$. A \emph{past link
    obstruction (type-$\infty$)} in~$K$ with respect to $\vecw$ is a vertex $\vecv\in K_0$ such that
    $lk^{-}_{K,\vecw}(\vecv)$ is not contractible. A \emph{past link
    obstruction (type-$0$)}  in $K$ with respect to $\vecw$ is a vertex $\vecv\in K_0$ such that
    $lk^{-}_{K,\vecw}(\vecv)$ is not~connected.

\end{definition}

Directed collapses preserve some topological properties of the space of directed
paths. In particular:

\begin{corollary}
If there are no type-$\infty$ past link obstructions, then all spaces of directed paths from the initial point are contractible.  If there are no type-$0$ past link obstructions, all spaces of directed paths from the initial point are connected.
\end{corollary}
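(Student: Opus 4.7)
The plan is to observe that this corollary is an essentially immediate consequence of \thmref{contractibility} and \thmref{connected}, simply rephrased through the language of past link obstructions introduced in \defref{linkobstruction}. The work is to unpack the definitions and apply each theorem in turn.

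For the first statement, I would argue as follows. Fix the initial vertex $\mathbf{w} = \mathbf{0}$. By \defref{linkobstruction}, saying that $K$ has no type-$\infty$ past link obstructions with respect to $\mathbf{0}$ means that for every vertex $\mathbf{v} \in K_0$, the past link $lk^{-}_{K,\mathbf{0}}(\mathbf{v})$ is contractible (with the convention that vacuous past links count as contractible, or equivalently are subsumed by the ``empty'' case of \thmref{contractibility}). This is precisely the hypothesis of \thmref{contractibility}, so the theorem applies and yields that $\di{P}_{\mathbf{0}}^{\mathbf{k}}(K)$ is contractible for every $\mathbf{k} \in K_0$.

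For the second statement, the argument is parallel. Absence of type-$0$ past link obstructions in $K$ with respect to $\mathbf{0}$ means that $lk^{-}_{K,\mathbf{0}}(\mathbf{v})$ is connected for every $\mathbf{v} \in K_0$. This is the hypothesis of \thmref{connected}, which then gives that $\di{P}_{\mathbf{0}}^{\mathbf{k}}(K)$ is connected for every $\mathbf{k} \in K_0$, as claimed.

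There is no real obstacle here. The only subtlety worth flagging in the write-up is the minor mismatch between ``not contractible'' in \defref{linkobstruction} and the ``contractible or empty'' wording in \thmref{contractibility}: if one insists that the empty space is not contractible, then the statement should be read as being about vertices with non-empty past links, or one should simply adopt the convention (consistent with the proof of \thmref{contractibility}) that empty past links pose no obstruction since the corresponding $\di{P}_{\mathbf{0}}^{\mathbf{k}}(K)$ is itself empty and thus vacuously contractible. Either convention makes the corollary an immediate consequence of the two theorems.
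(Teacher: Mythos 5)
Your proposal is correct and matches the paper's own proof, which likewise derives the first claim directly from \thmref{contractibility} and the second from \thmref{connected}. Your added remark about reconciling ``not contractible'' in \defref{linkobstruction} with the ``contractible or empty'' hypothesis of \thmref{contractibility} is a reasonable clarification but does not change the argument.
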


\begin{proof}
Contractibility is a direct consequence of \thmref{contractibility}.  Likewise, connectedness follows from \thmref{connected}.
\end{proof}

\begin{corollary}[Invariants of Directed Collapse]\label{thm:invariant}
    If we have a sequence of directed collapses from $K$ to $K'$, then there are
    no obstructions in $K$ iff there are no obstructions in $K'$.
\end{corollary}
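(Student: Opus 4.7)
The plan is to prove invariance by induction on the length of the sequence of directed collapses, so it suffices to handle a single step $K \to K'$. The defining property of a directed collapse supplies the main ingredient: $lk^-_K(\vecv) \simeq lk^-_{K'}(\vecv)$ for every $\vecv \in K'_0$, and since homotopy equivalence preserves both contractibility and connectedness, such a $\vecv$ is a type-$\infty$ or type-$0$ past link obstruction in $K$ if and only if it is one in $K'$. For a directed $0$-collapse the analogous comparison uses the weaker ``$lk^-_K(\vecv)$ is connected iff $lk^-_{K'}(\vecv)$ is connected'' built into the definition, which is exactly what is needed for type-$0$ obstructions.

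For the forward direction, every vertex of $K'$ is already a vertex of $K$, so ``no obstructions in $K$'' immediately yields ``no obstructions in $K'$'' by the past link equivalence above. The reverse direction also covers every $\vecv \in K'_0$ by the same equivalence; what remains is to show that any vertex $\vecv \in K_0 \setminus K'_0$ was not an obstruction in $K$ to begin with. A vertex can only be removed when $\tau = \{\vecv\}$ is itself a $0$-cube of the collapsing pair, and in that case the hypothesis that $\tau$ belongs to no other maximal cube forces $\sigma$ to be the unique maximal cube of $K$ containing $\vecv$. Consequently every cube in $K$ with top vertex $\vecv$ is a face of $\sigma$, so $lk^-_{K,\mathbf{0}}(\vecv)$ is a full simplex on the set of backward coordinate directions into $\sigma$ from $\vecv$, hence both contractible and connected.

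I expect the main obstacle to be the bookkeeping in this vertex-removal case: one must verify \emph{locally} that the past link at the removed vertex is automatically well-behaved so that the iff is not spoiled when $\vecv$ disappears from $K'$. Once the full-simplex description of $lk^-_{K,\mathbf{0}}(\tau)$ near a unique maximal cube is in hand, the step is short, and the induction on the length of the collapse sequence completes the proof for any number of iterated collapses.
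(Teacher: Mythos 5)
Your proposal is correct, and it is in fact more complete than what the paper records: the paper states this corollary with no proof at all, implicitly treating it as immediate from \defref{directedCollapse}, which guarantees $lk^-_K(\vecv)\simeq lk^-_{K'}(\vecv)$ for every $\vecv\in K'_0$. That one-line argument (homotopy equivalence preserves contractibility and connectedness, then induct on the length of the collapsing sequence) is exactly your first paragraph, and it settles both directions for all vertices that survive the collapse. The genuine added value of your write-up is the case $\vecv\in K_0\setminus K'_0$, which the paper's implicit argument silently skips: a vertex is removed only when $\tau=\{\vecv\}$, and your observation that uniqueness of the maximal cube $\sigma$ containing $\tau$ forces every cube of $K$ with top vertex $\vecv$ to be a face of $\sigma$, so that $lk^-_{K,\mathbf{0}}(\vecv)$ is a full simplex on the backward directions of $\sigma$ at $\vecv$, is correct and is precisely the local bookkeeping needed to make the ``iff'' honest. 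The only refinement I would add is that this full simplex may be empty (when $\vecv$ is the minimal vertex of $\sigma$); since the paper's own Theorem~\ref{thm:contractibility} treats ``contractible or empty'' as the non-obstructed case (the initial vertex $\mathbf{0}$ itself always has empty past link), the conclusion should read ``contractible or empty, hence not an obstruction.'' With that one-clause adjustment, your argument is a valid and strictly more careful proof than the paper provides.
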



\begin{remark}[Past Link Obstructions are Inherently Local]
The past link of a vertex is constructed using local (rather than global) information from the cubical complex.  Therefore, a past link obstruction is also a local property, which is not dependent on the global construction of the cubical complex. \
\end{remark}


Below, we provide a few motivating examples for our definition of directed collapse.
In general, we want our directed collapses to preserve all spaces of directed paths between the
initial vertex and any other vertex in our cubical complex.  Notice, $\tau$ from \defref{directedCollapse} is a \emph{free face} of $K$.
Performing a directed collapse with an arbitrary free face of a directed space $K$ with
minimal element $\mathbf{0} \in K_0$ and maximal element $\mathbf{1} \in K_0$
can modify the individual spaces of directed paths $\di{P}_{\mathbf{0}}^{\vecv}(K)$
and $\di{P}_{\vecv}^{\mathbf{1}}(K)$ for $\vecv \in K_0$.

When
$\di{P}_{\vecv}^{\mathbf{1}}(K) = \emptyset$, we call $\vecv$ a
\emph{deadlock}.  When $\di{P}_{\mathbf{0}}^{\vecv}(K) = \emptyset$, we call
$\vecv$ \emph{unreachable}. Deadlocks and unreachable vertices are in a sense
each others opposites.  Notice if we take the same directed space~$K$ yet reverse the direction of all dipaths, then deadlocks become unreachable vertices and vice versa.
However, as \exref{grid} and \exref{collveredg} illustrate, the creation of an unreachable vertex in the process of a directed collapse might result in a past link obstruction at a neighboring vertex while the creation of a deadlock does not.

\begin{example}[3 x 3 Grid, Deadlocks \& Unreachability]\label{ex:grid}
Let $K$ be the Euclidean cubical complex in $\mathbb{R}^2$ that is the $3\times 3$
grid. Consider the Euclidean cubical complexes~$K'$ and~$K''$ obtained by
    removing~$(\tau, \sigma)$ with $\tau = [(1,3), (2,3)], \sigma
    =[(1,2),(2,3)]$ and~$(\tau' , \sigma')$ with~$\tau' =
    [(1,0), (2,0)], \sigma' = [(1,0), (2,1)]$,
respectively.  While $K'$  is a directed collapse of~$K$,~$K''$
is not a directed collapse of $K$; this is because $K''$ introduces a
past link obstruction at~$(2,1)$.  So,~$(\tau, \sigma)$ is a collapsing
pair while $(\tau', \sigma')$ is not. Collapsing~$K$ to~$K'$ creates a
deadlock at~$(1,3)$ but this does not change the space of directed paths from the
designated start vertex $\mathbf{0}$ to any of the vertices between $\mathbf{0}$
and the designated end vertex~$(3,3)$ (see $K'$ in \figref{DeadlocksUnreachable}).  However, collapsing~$K$ to~$K''$
creates an unreachable vertex~$(2,0)$ from the start vertex $\mathbf{0}$  (see $K''$ in \figref{DeadlocksUnreachable}) which does change the space of directed paths from $\mathbf{0}$ to $(2,0)$ to be empty. Hence not all spaces of directed paths starting at $\mathbf{0}$ are preserved. This motivates our definition of directed~collapse.
\end{example}

\begin{figure}[h!]
    \centering
    {\includegraphics[width=.8\textwidth]{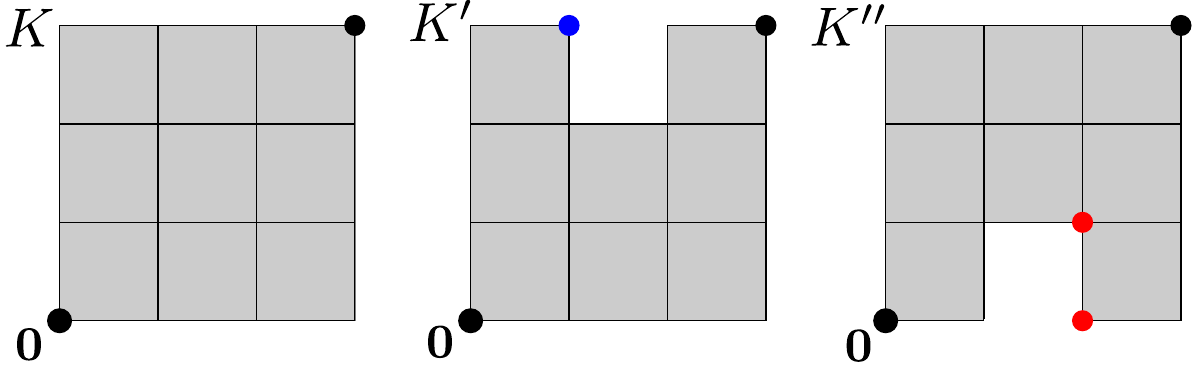}}\qquad
    \caption{Illustrating \exref{grid}. On the left: the cubical complex $K$ with initial vertex $\mathbf{0}$ and final vertex $(3,3)$. In the center: The cubical complex $K'$ which is a directed collapse of $K$. The deadlock in blue does not change the space of directed paths from $\mathbf{0}$ to any of the vertices between $\mathbf{0}$
and~$(3,3)$. On the right: the cubical complex $K''$ which is not a
    directed collapse of $K$. The space of directed paths into the unreachable red vertex,
    $(2,0)$, becomes empty. This is reflected in the topology of the past link of the red vertex $(2,1)$ (see \exref{collveredg}).  }
    \label{fig:DeadlocksUnreachable}
\end{figure}

Our next example shows how directed collapses can be performed with collapsing
pairs~$(\tau, \sigma)$ when $\tau$ is of codimension one and greater.

\begin{example}[3 x 3 grid, Edge \& Vertex Collapses]\label{ex:collveredg}
    Consider again the Euclidean cubical complex $K$ from \exref{grid}.  If we allow
    a collapsing pair~$(\tau, \sigma)$ with $\tau$ of dimension greater than
    $0$, we may introduce deadlocks or unreachable vertices. In particular, collapsing the free edge $\tau = [(1,3), (2,3)]$ of the top blue square $\sigma = [(1,2),(2,3)]$ in \figref{EdgeVertexCollapse} changes the space of directed paths $\di{P}_{(1,3)}^{(3,3)}(K)$ from being trivial to empty in $K \backslash \{ \gamma \vert \tau \subseteq \gamma\subseteq \sigma \}$. Yet we
    care about preserving the space of directed paths from our designated start vertex $\mathbf{0}$
    to any of the vertices $(i,j)$ with $0 \leq i,j \leq 3$ since we ultimately are interested in preserving the path space $\di{P}_{\mathbf{0}}^{(3,3)}(K)$.  Because
    of this, such collapses should be allowed in our directed setting.  Note that, in
    these cases, the past link of all vertices remains contractible.
   However, collapsing the free edge $\tau' = [(1,0), (2,0)]$ of the bottom red square $ \sigma' = [(1,0),(2,1)]$ in
    \figref{EdgeVertexCollapse} changes the path space $\di{P}_{\mathbf{0}}^{(2,0)}(K)$ from being trivial to empty.  This is reflected in the non-contractible past link of $(2,1)$ in $K\backslash \{ \gamma \vert \tau' \subseteq \gamma \subseteq \sigma' \}$  that consists of the two vertices $\mathbf{j} = (1,0)$ and $\mathbf{j'} = (0,1)$ but not the edge $\mathbf{j''} = (1,1)$ connecting them.
    Restricting our collapsing pairs to only include $\tau$ of
    dimension 0 allows for only two potential collapses, the corner vertices~$(0,3)$ and $(3,0)$ into the yellow squares $[(0,2), (1,3)]$ and $[(2,0), (3,1)]$, respectively.  Neither of these collapses
    create deadlocks or unreachable vertices and the contractibility of the past
    link at all vertices is preserved.  Performing these corner vertex collapses
    exposes new free vertices that can be a part of subsequent collapses.

\centering
\begin{figure}[h!]
    \hfill
    \begin{subfigure}[b]{0.4\textwidth}
        \includegraphics[width=.6\textwidth]{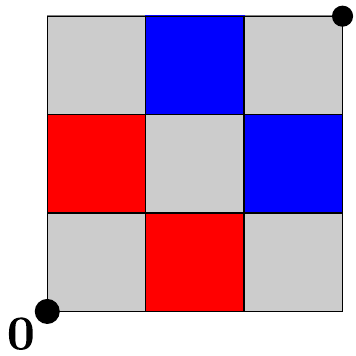}
        \caption{Edge Collapse}
        \label{fig:collapse-edge}
    \end{subfigure}
    \quad 
    \begin{subfigure}[b]{0.4\textwidth}
        \includegraphics[width=.6\textwidth]{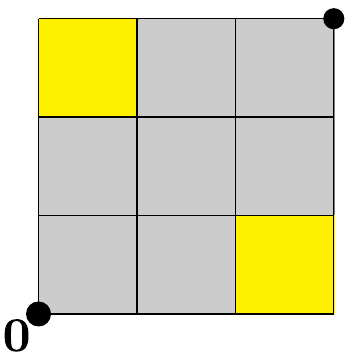}
        \caption{Vertex Collapse}
        \label{fig:collapse-vertex}
    \end{subfigure}
    \caption{Illustrating \exref{collveredg}. On the left: the collapsing of the
    free edge in the blue squares is an admitted directed collapse. The
    collapsing of the free edge in the red squares is not an admitted directed
    collapse. On the right: the collapsing of the free vertex in the yellow
    squares is an admitted directed collapse.}
    \label{fig:EdgeVertexCollapse}
\end{figure}
\end{example}

Lastly, we explain how the Swiss flag can be collapsed using a sequence of $0$-collapses. The Swiss flag contains uncountably many paths between the initial and final vertex. After performing the sequence of 0-collapses as described in~\exref{CollapseSwissflag}, there are only two paths up to reparametrization between the initial and final vertex. These two paths represent the two dihomotopy classes of paths that exists for the Swiss Flag. Referring back to concurrent programming, this means there are only two ways to design a concurrent program; either the first process holds a lock on the two resources then releases them so the other process can place a lock on the resources or vice versa.

\begin{example}[0-collapsing the Swiss Flag] The Swiss flag considered
  as a Euclidean cubical complex in the $5\times 5$ grid has
  vertices with connected past links, except at $(4,3)$ and $(3,4)$. The vertex $(2,2)$ and the
  cube $[1,2]\times [1,2]$ are a $0$-collapsing pair. The vertex
  $(3,3)$ and the cube $[3,4]\times [3,4]$ are not, since that
  collapse would produce a disconnected past link at $(4,4)$. A
  sequence of $0$-collapses preserving the initial and final point will give a
  1-dimensional Euclidean cubical complex and one 2-cube. Specifically, we get the edges
  $[0,1]\times\{0\},$ $\{1\}\times [0,1]$, $\{1\}\times [1,3]$, $[1,3]\times\{1\}$, $[1,2]\times\{3\}$, $\{3\}\times[1,2]$, $\{2\}\times[3,4]$, $[3,4]\times \{2\}$, $[2,3]\times\{4\}$, $\{4\}\times [2,3]$, the square $[3,4]\times [3,4]$, and lastly the edges $\{4\}\times [4,5]$ and $[4,5]\times \{5\}$.
  \label{ex:CollapseSwissflag}
  \end{example}

\begin{figure}[h!]
    \centering
    {\includegraphics[width=.7\textwidth]{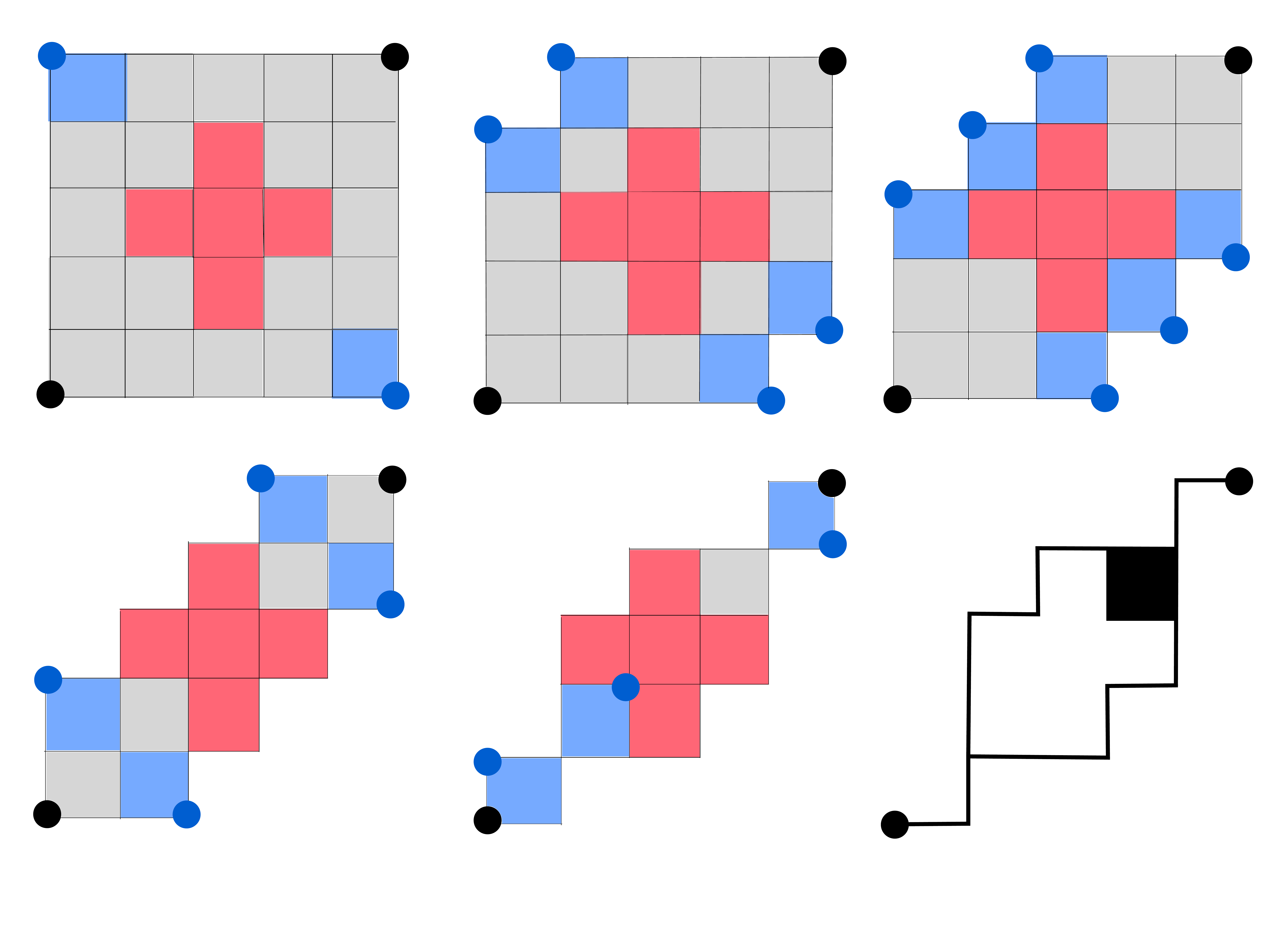}}  \qquad
    \caption{Zero-collapsing the Swiss Flag. A sequence of zero-collapses is
    presented from the top left to bottom right. At each stage, the faces and
    vertices shaded in blue represent the 0-collapsing pairs at that stage. The
    result of the sequence is shown in the bottom right which is a
    one-dimensional Euclidean cubical complex and one two-cube.}
    \label{fig:CollapseSwissflag}
\end{figure}

\section{Discussion}\label{sec:discussion}
Directed topological spaces have a rich underlying structure and many interesting applications.  The analysis of this structure requires tools that are not fully developed, and a further investigation into these methods will lead to a better understanding of directed spaces.  In particular, the development of these notions, such as directed collapse, may lead to a better understanding of equivalence of directed spaces and their spaces of directed paths.

Interestingly, when comparing directed collapse
with the notion of cubical collapse in the undirected case, two main contrasts
arise.  First, the notion of directed collapse is stronger than that of cubical
collapse; any directed collapse is a cubical collapse, but not all cubical
collapses satisfy the past link requirement of directed collapse.  However,
directed collapse is not related to existing notions of dihomotopy equivalence which involve continuous maps between topological spaces that preserve directed paths.
This contrasts the undirected case; any two spaces related by cubical collapses are homotopic.
This suggests the need for dihomotopy equivalence with respect to an initial 
point.


Directed collapse may not preserve dihomotopy equivalence, so we can collapse more than e.g. Kahl. By~\thmref{invariant}, if $K'$ is a directed collapse of~$K$ with respect to~$\vecv$ and~$K'$ has trivial spaces of directed paths from~$\vecv$, then so does $K$. Similarly, if all spaces of
directed paths are connected in $K'$, this is true in~$K$. Hence, our definition 
of directed collapsibility preserves spaces of directed paths with an initial vertex 
of $\mathbf{0}$. This allows us to study more types of concurrent programs and 
preserve notions of partial executions.

There are many future avenues of research that we hope to pursue in the directed topological setting.  First, we hope to find necessary and sufficient conditions for a pair of cubical cells $(\tau,\sigma)$ to be a collapsing pair.   The key will be to have a better understanding of what removing a cubical cell does to the past link of a complex.  In addition to this, as there are many types of simplicial collapse, it would be interesting to see what the directed counterpart to each of these types of collapses might be.  For example, is there a notion of strong directed collapse?  As strong collapse also considers the link of a vertex, a consideration of how this extends to a directed setting seems natural.

 Next, there is more to learn about past link obstructions.  It is clear
 performing a directed collapse will not alter the space of directed paths of a
 Euclidean cubical complex; however, if we are unable to perform a directed
 collapse due to a past link obstruction, what does this say about the space of
 directed paths?  \thmref{obstructions} is a start in this direction for $0$ collapses.  Another question may be,  in what way are obstructions of type $\infty$ realized as
 non-contractible spaces of directed paths?

 Another direction of research we hope to pursue is defining a way to compute a directed homology that is collapsing invariant. Even in the two-dimensional setting (where the cubes are at most dimension two), this has proved to be difficult, as adding one two-cell can have various effects depending on the past links of the vertices involved. We would like to classify the spaces where such a dynamic programming approach would work.

Lastly, there are many computational questions on how to implement the collapse of a directed cubical complex. In \cite{lachaud}, an example of collapsing a three-dimensional cubical complex is implemented in C++. This could be used as a model when handling the directed complex.

Many interesting theoretical and computational questions continue to emerge in the field of directed topology. We hope that our research
excites others in studying cubical complexes in the directed setting.

\begin{acknowledgement}
This research is a product of one of the working groups at the
Women in Topology (WIT) workshop at MSRI in November 2017. This workshop was
organized in partnership with MSRI and the Clay Mathematics Institute, and was
partially supported by an AWM ADVANCE grant (NSF-HRD 1500481).  
In addition, LF and BTF further collaborated at the Hausdorff Research Institute
for Mathematics during the Special Hausdorff Program on Applied and
Computational Algebraic Topology (22017).

The authors also thank the generous support of NSF.
RB is partially supported by the NSF GRFP (grant no.\ DGE 1649608).
BTF is partially supported by NSF CCF 1618605.
CR is partially supported by the NSF GRFP (grant no.\ DGE 1842165).\end{acknowledgement}

\bibliographystyle{plain}
\bibliography{bibliography}

\end{document}